\documentclass[12pt, reqno]{amsart}
\usepackage{amssymb,amsmath,amsthm,cancel,hyperref,color, enumerate}
\usepackage[pdftex]{graphicx}
\hypersetup{colorlinks=true,linkcolor=blue,citecolor=magenta}
\oddsidemargin = 0cm \evensidemargin = 0cm \textwidth = 6.5in
\textheight8.8in

\newcommand{\pa}[1]{\left( {#1} \right)}

\newcommand{\pf}[2]{\pa{\frac{#1}{#2}}}

\newcommand{\Z}{\mathbb{Z}}
\newcommand{\N}{\mathbb{N}}
\newcommand{\Q}{\mathbb{Q}}

\newcommand{\calO}{\mathcal{O}}

\newtheorem{thm}{Theorem}[section]

\newtheorem{rem}{Remark}

\newcommand{\EGR}{\text{EGR}}
\begin{document}
\newcounter{itemcounter}
\newcounter{itemcounter2}
\title[Elliptic Curves with Everywhere Good Reduction]
{Elliptic Curves with Everywhere Good Reduction}
\author{Amanda Clemm and Sarah Trebat-Leder}
\address{Department of Mathematics, Emory University, Emory, Atlanta, GA 30322}
\email{aclemm@emory.edu, strebat@emory.edu}
\subjclass[2010]{11G05}
\keywords{elliptic curves; quadratic fields; diophantine equations; everywhere good reduction}
\thanks{The second author thanks the NSF for its support.}

\begin{abstract}
We consider the question of which quadratic fields have elliptic curves with everywhere good reduction.  By revisiting work of Setzer, we expand on congruence conditions that determine the real and imaginary quadratic fields with elliptic curves of everywhere good reduction and rational $j$-invariant. Using this, we determine the density of such real and imaginary quadratic fields. If $R(X)$ (respectively $I(X)$) denotes the number of real (respectively imaginary) quadratic fields $K=\mathbb{Q}[\sqrt{m}]$ such that $|\Delta_K| < X$ and for which there exists an elliptic curve $E/K$ with rational $j$-invariant that has everywhere good reduction, then $R(X) \gg \frac{X}{\sqrt{\log(X)}}$. To obtain these estimates we explicitly construct quadratic fields over which we can construct elliptic curves with everywhere good reduction.  The estimates then follow from elementary multiplicative number theory. In addition, we obtain infinite families of real and imaginary quadratic fields such that there are no elliptic curves with everywhere good reduction over these fields. 
\end{abstract}
\maketitle
\noindent

\section{Introduction}
It is a well-known result that over $\mathbb{Q}$ there are no elliptic curves $E$ with everywhere good reduction. However, the same is not true over general number fields. For example, let $K = \mathbb{Q}(\sqrt{29})$ and $a = \frac{5 + \sqrt{29}}{2}$.  Then the elliptic curve $$E: y^2 + xy + a^2 y = x^3$$ has unit discriminant, and hence has everywhere good reduction over $K$.  

This leads to the natural question: Over which number fields do there exist elliptic curves with everywhere good reduction? This question has often been approached by studying $E/K$ with everywhere good reduction which satisfy additional properties, such as those which have a $K$-rational torsion point, admit a global minimal model, or have rational $j$-invariant.  We say that an elliptic curve $E/K$ has $\EGR(K)$ if it has everywhere good reduction over $K$, and that an elliptic curve $E/K$ has $\EGR_{\mathbb{Q}}(K)$ if it additionally has rational $j$-invariant. Similarly, we say a quadratic field has $\EGR$ if there exists a $\EGR(K)$ elliptic curve. 

For many real and imaginary quadratic fields $K$ of small discriminant, explicit examples of elliptic curves $E/K$ with everywhere good reduction can be found in the literature, such as \cite{kida1999reduction} and \cite{ishii1986non}. There are also many known examples of such fields for which there do not exist any elliptic curves $E/K$ with everywhere good reduction; see \cite{kida1999reduction}, \cite{kida1997nonexistence}, \cite{kagawa2000nonexistence} for example.  

Kida \cite{kida1999reduction} showed that if $K$ satisfies certain hypotheses, every $E/K$ with $\EGR$ has a $K$-rational point of order two. This condition led to a series of non-existence results for particular real quadratic fields with small discriminant. In \cite{Setzer:1981vg}, Setzer classified elliptic curves with $\EGR$ over real quadratic number fields with rational $j$-invariant. Kida extended Setzer's approach by giving a more general method suitable for computing elliptic curves with $\EGR$ over certain real quadratic fields with rational or singular $j$-invariants in \cite{kida2000computing}. Comalada \cite{Comalada:1990ur} showed that there exists $E/K$ with $\EGR$, a global minimal model, and a $K$-rational point of order two if and only if one of his sets of diophantine equations has a solution. Ishii supplements this theorem by studying $k-$rational 2 division points in \cite{ishii1986non} to demonstrate specific real quadratic fields without $\EGR$ elliptic curves. Later Kida and Kagawa in \cite{kida1997nonexistence} generalized Ishii's result to obtain non-existence results for $\mathbb{Q}(\sqrt{17)}$, $\mathbb{Q}(\sqrt{73)}$ and $\mathbb{Q}(\sqrt{97)}$. Yu Zhao determined criteria for real quadratic fields to have elliptic curves with $\EGR$ and a non-trivial 3-division point. In \cite{zhao2013elliptic}, he provides a table for all such fields with discriminant less than 10,000. 

For imaginary quadratic fields, Stroeker \cite{Stroeker:1983to} showed that no $E/K$ with $\EGR$ admits a global minimal model.  In \cite{Setzer:1978ub}, Setzer showed that there exist elliptic curves with $\EGR$ and a $K$-rational point of order two if and only if $K = \Q(\sqrt{-m})$ with $m$ satisfying certain congruence conditions. Comalada and Nart provided criteria to determine when elliptic curves have $\EGR$ in \cite{comalada1992modular}. Kida combined this result with a method of computing the Mordell-Weil group in \cite{kida2001good} to prove there are no elliptic curves with $\EGR$ over the fields $\mathbb{Q}(\sqrt{-35}), \mathbb{Q}(\sqrt{-37}), \mathbb{Q}(\sqrt{-51})$ and $\mathbb{Q}(\sqrt{-91})$. 
The following tables shows what it known for $0 \leq \Delta_K \leq 47$.  

\begin {table}[htbp]
\caption {Existence and Nonexistence of an Elliptic Curve with $\EGR(K)$}
\begin{center}
\begin{tabular}{ |c|c|c| } 
 \hline
 \text{Existence} & \text{Non-existence} \\ 
 \hline
 6 & 2 \\
7 & 3 \\
14 & 5 \\
22 & 10  \\
26 & 11   \\
29 & 13   \\
33 & 15 \\
37 & 17 \\
48 & 19 \\
41 & 21 \\
65 & 23 \\
& 30 \\
& 31 \\
& 34 \\
& 35 \\
& 39 \\
& 42 \\
& 43 \\
& 46 \\
& 47 \\
 \hline
\end{tabular}
\end{center}
\end{table}

A combination of the above results gives many methods to prove that a particular quadratic number field has an $\EGR$ elliptic curve. Cremona and Lingham \cite{Cremona:2007vb} described an algorithm for finding all elliptic curves over any number field $K$ with good reduction outside a given set of primes. However, this procedure relies on finding integral points on certain elliptic curves over $K$, which can limit its practical implementation. As a consequence of Setzer's result regarding the classification of elliptic curves over both real and imaginary quadratic number fields with rational $j$-invariant, it is known that there infinitely many quadratic fields which have an $\EGR$ elliptic curve. However, there is no conjectured density result for the proportion of  quadratic fields over which there exist elliptic curves $E$ with everywhere good reduction.

Let $R(X)$ be the number of real quadratic number fields $K$ with discriminant at most $X$ and an $\EGR{_\mathbb{Q}}(K)$ elliptic curve. By revisiting the results of Setzer, we prove the following.  

\begin{thm}\label{rsqrtbound}
With $R(X)$ as above, we have that
$$R(X) \gg \frac{X}{\sqrt{\log(X)}}.$$
\end{thm}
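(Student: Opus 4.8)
The plan is to feed the explicit congruence classification established above (our refinement of Setzer's conditions) into a counting argument of Landau type. \textbf{Step 1 (a multiplicative sufficient condition).} First I would extract from the classification a clean \emph{sufficient} condition on the squarefree integer $m>0$ guaranteeing that $K=\Q(\sqrt m)$ admits an $\EGR_\Q(K)$ curve, and record it in multiplicative form: a set $\mathcal M$ of squarefree $m$ cut out by (i) finitely many congruences modulo a fixed power of $2$ and a fixed auxiliary prime, together with (ii) a constraint that every prime factor of $m$ lie in a prescribed union of residue classes of Dirichlet density $\tfrac12$ (equivalently, that $m$ be represented by a fixed principal binary quadratic form, such as $x^2+y^2$). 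Because distinct positive squarefree $m$ give distinct fields and $\Delta_K\asymp m$, it then suffices to prove $\#\{m\in\mathcal M: m\le cX\}\gg X/\sqrt{\log X}$ for a suitable constant $c>0$.

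\textbf{Step 2 (the count).} The exponent $1/2$ on $\log X$ is exactly the Landau density for integers whose prime factors are confined to half the residue classes; I would make this precise via the Dirichlet series $F(s)=\sum_{m\in\mathcal M}m^{-s}$, showing that on a neighborhood of $s=1$ it factors as $(\zeta(s)L(s,\chi))^{1/2}$ times a function holomorphic and nonvanishing for $\Re{s}\ge 1$, where $\chi$ is the quadratic character attached to the prescribed classes. A Tauberian theorem of Selberg--Delange type then gives $\#\{m\in\mathcal M:m\le Y\}\sim c'\,Y(\log Y)^{-1/2}$; alternatively one obtains the lower bound entirely elementarily from Landau's estimate for sums of two squares together with the hyperbola method. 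The finitely many congruence conditions in (i) and the squarefree restriction each cost only a positive constant factor and do not affect the power of $\log$.

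\textbf{Main obstacle.} The delicate point is the interface between Steps 1 and 2: one must choose the prescribed residue classes so that the congruence conditions forced by the EGR criterion are \emph{simultaneously} satisfiable on a set of density $\asymp X/\sqrt{\log X}$, i.e. so that the admissible primes really have Dirichlet density $\tfrac12$ (yielding the exponent $1/2$) rather than collapsing the family to a thin set of size $O(X^{1/2})$ or forcing an incompatible congruence that empties it. Verifying that the multiplicatively-defined $\mathcal M$ genuinely lies inside the EGR classification --- and that the mod-$2^a$ and auxiliary-prime constraints merely trim a positive proportion --- is where the arithmetic of Setzer's conditions must be matched carefully to the analytic count; once that compatibility is secured, the number-theoretic estimate is routine.
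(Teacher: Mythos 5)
Your proposal follows essentially the same route as the paper: extract from the Setzer-type classification a multiplicative sufficient family (the paper's Theorem~\ref{thm237}, i.e.\ $m=2q$ with $q\equiv 3\pmod 8$ and all prime factors $q_j\equiv 1,3\pmod 8$, a density-$\tfrac12$ set of primes), then count it by a Selberg--Delange/Landau-type theorem (the paper invokes Serre's result, Theorem~\ref{Serre}, with $\alpha=1-1/2^r=\tfrac12$), with the auxiliary congruence and squarefree conditions costing only a constant factor. The ``main obstacle'' you flag --- exhibiting a compatible choice of residue classes --- is exactly what the paper resolves by the explicit good discriminant $D=2$ (from $A=20\in\mathcal{R}$) and verification of conditions (a)--(e) of Theorem~\ref{congrucond}.
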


If $I(X)$ is the number of imaginary quadratic number fields $K$ with $|\Delta_K|<X$ and an $\EGR_{\mathbb{Q}}(K)$ elliptic curve, we also obtain the result below. 
\begin{thm}\label{isqrtbound}
With $I(X)$ as above, we have that
$$I(X) \gg \frac{X}{\sqrt{\log(X)}}.$$
\end{thm}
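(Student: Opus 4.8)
The plan is to run the argument for Theorem~\ref{isqrtbound} in close parallel with the proof of Theorem~\ref{rsqrtbound}, feeding in the imaginary-quadratic congruence conditions obtained from our reworking of Setzer's classification in \cite{Setzer:1978ub} in place of the real-quadratic ones. First I would record, for $K=\Q(\sqrt{-m})$ with $m$ squarefree, the conditions on $m$ that \emph{guarantee} an $\EGR_{\Q}(K)$ elliptic curve: a curve of rational $j$-invariant with everywhere good reduction over $K$ arises as a quadratic twist of a fixed curve over $\Q$, and the twist can absorb the bad reduction of the base curve exactly when the relevant primes split (and satisfy the appropriate local congruences) in $K$. This converts ``$K$ has an $\EGR_{\Q}$ curve'' into splitting and congruence conditions on the rational primes dividing $m$.

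Second, I would isolate from these conditions a purely \emph{multiplicative} sufficient condition: a set $S$ of rational primes, cut out by fixed residue classes to a small modulus (equivalently by a single quadratic splitting condition), such that $\Q(\sqrt{-m})$ carries an $\EGR_{\Q}$ curve whenever every prime factor of $m$ lies in $S$ and $m$ avoids a fixed finite set of forbidden residues (imposing squarefreeness and the seed congruences). The essential feature is that $S$ has natural density $\tfrac12$ among the primes, so the family $\{m : p \mid m \Rightarrow p \in S\}$ is precisely of Landau type.

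Third comes the counting, which is the elementary multiplicative number theory promised in the introduction. The number of squarefree integers $n\le X$ all of whose prime factors lie in a set of primes of density $\tfrac12$ is $\asymp X/(\log X)^{1/2}$ by the Selberg--Delange method (equivalently by Wirsing's theorem on mean values of nonnegative multiplicative functions; the prototype is Landau's theorem on sums of two squares). Since $|\Delta_K|\in\{m,4m\}$ we have $|\Delta_K|\asymp m$, so passing from $m<X$ to $|\Delta_K|<X$ only alters the implied constant, and distinct squarefree $m$ give distinct fields $K$. Removing the fixed finite set of forbidden residues costs only a constant factor. Hence $I(X)\gg X/\sqrt{\log X}$.

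The main obstacle is the first step: distilling from Setzer's criterion---which is phrased as the solvability of a norm/unit Diophantine equation together with congruences---a clean multiplicative \emph{sufficient} condition that still captures a positive proportion of the density-$\tfrac12$ family. One must check that requiring every prime factor of $m$ to split forces the Diophantine equation to be solvable, and that the seed congruences and squarefree restriction do not thin the family below the $X/\sqrt{\log X}$ threshold. A secondary subtlety, absent in the real case, is that $\Q(\sqrt{-m})$ has only finitely many units, so no global minimal model with unit discriminant can exist (Stroeker); one must confirm that the rational-$j$-invariant twist construction nonetheless produces everywhere good reduction by assembling the necessary unit discriminants locally, prime by prime, over exactly the split primes supplied by the condition on $m$.
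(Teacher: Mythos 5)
Your overall architecture coincides with the paper's: reformulate Setzer's classification of $\EGR_{\Q}(K)$ curves as splitting-plus-congruence conditions on the prime factors of $m$, isolate a multiplicative sufficient family whose prime set has density $\tfrac12$, and count by a Landau-type theorem (the paper invokes Serre's result, Theorem~\ref{Serre}, where you invoke Selberg--Delange/Wirsing; these are interchangeable here, and your discriminant bookkeeping matches the paper's passage from counting $q$ to counting fields). However, the step you defer as ``the main obstacle'' is precisely the mathematical content of the paper's argument, and your proposal leaves it unresolved, which is a genuine gap. A smaller issue first: the relevant classification is Setzer's 1981 rational-$j$-invariant paper \cite{Setzer:1981vg} (Theorem~\ref{Setzer} in the paper), not the 1978 two-torsion paper \cite{Setzer:1978ub} you cite; the criterion is not simply ``the relevant primes split,'' since the norm condition $\epsilon_D D = a^2 - b^2 Dq$ translates, via Legendre's theorem on ternary forms, into \emph{two-sided} conditions --- $\pf{-\epsilon_D q}{p_i}=1$ for every prime $p_i \mid D$ \emph{and} $\pf{\epsilon_D D}{q_j}=1$ for every prime $q_j \mid q$ --- together with a sign condition and $2$-adic congruences (Theorem~\ref{congrucond}).

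Concretely, your asserted density-$\tfrac12$ set $S$ ``cut out by a single quadratic splitting condition'' exists only if one exhibits a good $D$ with exactly one prime factor (so $r=1$ in Theorem~\ref{genboundim}), with $\epsilon_D D > 0$ so that $m = Dq < 0$ is permitted by condition (c), and with $D \equiv 1 \pmod{4}$ so that quadratic reciprocity collapses the two-sided conditions into the single condition $\pf{q_j}{D}=1$. This existence is exactly Theorem~\ref{thm238}: take $A = 16 \in \mathcal{R}$, so $16^3 - 1728 = 2^6 \cdot 37$ and $D = 37$ is good; then for $q = -q_1 \cdots q_n \equiv 1 \pmod{8}$ with $\pf{q_j}{37}=1$, conditions (a) and (b) both reduce to that one splitting condition, and since $37 \equiv -3 \pmod{8}$, condition (d) is the congruence $q \equiv 1 \pmod{4}$, which your ``seed congruences'' must impose. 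Without producing such a seed, the exponent $\tfrac12$ in your final bound is unsupported: a good $D$ with $r$ prime factors yields only $X/\log^{1-1/2^r}X$, and nothing in your sketch rules out that every admissible imaginary seed has $r \geq 2$. Your secondary worry about Stroeker's theorem, on the other hand, is already handled by the framework: Setzer's construction twists by $u \in K^\times$ (the curve $E_{u,A}$), everywhere good reduction is part of his theorem, and no global minimal model is required, so no prime-by-prime local assembly on your part is needed.
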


To prove Theorem~\ref{rsqrtbound}, we first show that all real quadratic fields of the form described below in Theorem~\ref{thm237} have $\EGR$, and then count these fields.  

\begin{thm}
\label{thm237}
Let $m = 2q$, where $q = q_1 \cdots q_n \equiv 3 \pmod{8}$ with $q_j \equiv 1, 3 \pmod{8}$ distinct primes.  Then the real quadratic field $K = \Q(\sqrt{m})$ has $\EGR$.  
\end{thm}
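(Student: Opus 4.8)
The plan is to prove Theorem~\ref{thm237} by exhibiting an explicit elliptic curve over $K = \Q(\sqrt{m})$ and checking directly that it has good reduction at every prime, in the spirit of the unit-discriminant curve over $\Q(\sqrt{29})$ from the introduction but organized so that the $j$-invariant remains rational. Following Setzer's approach I would take a model carrying a $2$-torsion point, $E\colon y^2 = x^3 + a x^2 + b x$, for suitable $a,b \in \Ok$, whose discriminant is $\Delta = 16\,b^2(a^2 - 4b)$ and whose invariant is $c_4 = 16(a^2 - 3b)$. Good reduction away from the prime $2$ then reduces to choosing $a,b$ so that $b$ and $a^2-4b$ are units at every odd prime, while good reduction at the ramified odd primes $q_j$ and at the ramified prime $2$ is the extra content; these three ranges of primes are exactly where the hypotheses $q_j \equiv 1,3 \pmod 8$ and $q \equiv 3 \pmod 8$ will be used, so the verification simultaneously recovers the congruence conditions that Setzer's classification attaches to $\EGR_{\Q}(K)$.

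At the primes not dividing $2q$ there is nothing to do: the chosen model already has unit discriminant and is unramified in $K$, so the reduction is good. The ramified odd primes $q_j$ are the first real input. Here the curve is arranged to have potentially good reduction, of additive type $I_0^*$ over $\Q_{q_j}$, which becomes good precisely after a ramified quadratic base change; the point is that the completion $K_{\frakp}$ at the prime above $q_j$ is $\Q_{q_j}(\sqrt{2q})$, and the element behaving like $\sqrt{-2}$ lies in it exactly when $-2$ is a quadratic residue modulo $q_j$, that is when $\leg{-2}{q_j}=1$, i.e. when $q_j \equiv 1,3 \pmod 8$. This is why the prime factors of $q$ are restricted to these two residue classes, and it is also what forces the count to range over integers all of whose prime factors split in $\Q(\sqrt{-2})$; the resulting density of such $q$ is $\asymp X/\sqrt{\log X}$ by elementary multiplicative number theory, producing the bounds in Theorems~\ref{rsqrtbound} and~\ref{isqrtbound}.

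The main obstacle is the ramified prime $2$. Because $m=2q$ is even, $2$ ramifies in $K$, the factor $16$ in $\Delta$ is nowhere near a unit, and the naive Weierstrass model is never minimal at the prime $\frakp_{2}$ above $2$; so the correct tool is Tate's algorithm run over the completion $K_{\frakp_{2}}$, which is a wildly ramified quadratic extension of $\Q_2$. The hypothesis $q \equiv 3 \pmod 8$ is what pins down this completion, whose square class is that of $\sqrt{6}$, and forces the quantities $b$, $a^2-4b$, and $c_4$ into the residue classes that make Tate's algorithm terminate with good reduction after the necessary change of model. This is a genuinely $2$-adic calculation and not a formal consequence of the unit conditions away from $2$, and it is the heart of the argument; I expect the bulk of the work, and all of the delicacy, to sit here. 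Once good reduction at $\frakp_{2}$ is confirmed, $E/K$ has $\EGR$, and since the model can be taken with rational $j$-invariant it in fact witnesses $\EGR_{\Q}(K)$, which is exactly what is needed to feed Theorem~\ref{thm237} into the count proving Theorem~\ref{rsqrtbound}.
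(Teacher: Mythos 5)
There is a genuine gap here: you never actually produce the curve. The paper proves Theorem~\ref{thm237} in two lines from Theorem~\ref{congrucond}, its corrected reformulation of Setzer's classification (Theorem~\ref{Setzer}): the choice $A = 20 \in \mathcal{R}$ with $20^3 - 1728 = 6272 = 2 \cdot 56^2$ shows $D = 2$ is good, and then the hypotheses of the theorem are precisely conditions (a)--(e): condition (b) is $\leg{-2}{q_j} = 1$, which holds exactly when $q_j \equiv 1, 3 \pmod 8$; condition (e) is $q \equiv D + 1 \equiv 3 \pmod 8$; condition (c) holds since $m > 0$; and (a), (d) are vacuous for $D = 2$. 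Your proposal instead sets out to re-prove the hard direction of Setzer's theorem in this special case by direct verification. That is legitimate in principle, but every step carrying content is deferred: no $a, b$ (equivalently, no twisting element $u$ in Setzer's family $E_{u,A}$) is ever specified, so there is nothing on which to run Tate's algorithm, and you explicitly leave the $2$-adic computation at the ramified prime above $2$ --- which you yourself identify as ``the heart of the argument'' --- undone. A proof that announces its decisive verification without performing it is an outline, not a proof.

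The second structural gap is global existence. Arranging that $b$ and $a^2 - 4b$ are units outside the intended primes, simultaneously at all places, is not a prime-by-prime matter: it amounts to solving the norm equation $-2 = x^2 - 2q\,y^2$ over $\Q$, i.e., showing that $\epsilon_D D = -2$ is a rational norm from $K$. This is exactly where the conditions $\leg{-2}{q_j} = 1$ enter --- as local solvability conditions that must be assembled into a global solution via Legendre's theorem (Hasse--Minkowski), which is how the paper proves the equivalence in Theorem~\ref{congrucond}. Your local discussion at $q_j$ (the twist of type $I_0^*$ becoming good over the ramified quadratic completion, with $\leg{-2}{q_j}=1$ as the residue condition) correctly identifies the congruence, but a local obstruction vanishing at each $q_j$ separately does not by itself furnish the single global element $u$ needed for the model; and without that element the reduction checks, including the one at $2$ where $q \equiv 3 \pmod 8$ must be used, cannot even begin. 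To complete your route you would have to carry out both the norm-equation argument and the full Tate-algorithm analysis at $2$; the paper's route packages all of this once and for all into Theorem~\ref{congrucond} and reduces Theorem~\ref{thm237} to checking congruences.
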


\begin{rem}
If $m$ is as described in Theorem~\ref{thm237}, there exists $E/K$ with $\EGR$ and $j(E) = 20^3$.
\end{rem}

Similarly, to prove Theorem~\ref{isqrtbound}, we show all imaginary quadratic fields found below in Theorem~\ref{thm238} have $\EGR$. 

\begin{thm}
\label{thm238}
Let $m = 37q$, where $q = -q_1 \cdots q_n \equiv 1 \pmod{8}$ with $q_j$ distinct primes such that $\pf{q_j}{37} = 1$. Then the imaginary quadratic field $K = \Q(\sqrt{m})$ has $\EGR$. 
\end{thm}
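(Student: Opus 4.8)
The plan is to produce an explicit elliptic curve $E/K$ with $j(E) \in \Q \setminus \{0, 1728\}$ and everywhere good reduction, realized as a quadratic twist of a fixed rational curve; this is essentially a twist-theoretic repackaging of Setzer's explicit-equation method. Fix $E_0/\Q$ with integral (hence everywhere potentially good) rational $j$-invariant $j_0 \neq 0,1728$, having good reduction at $2$ and at every prime other than $37$, and additive reduction of Kodaira type $I_0^*$ at $37$, so that $E_0$ acquires good reduction already over the ramified quadratic extension $\Q_{37}(\sqrt{37})$ (its inertial type at $37$ being quadratic). Since $j_0 \notin \{0,1728\}$, every elliptic curve over $K$ with $j$-invariant $j_0$ is a quadratic twist $E_0^{(d)}$ for some $d \in K^\times$, so the problem reduces to choosing $d$ so that $E_0^{(d)}$ has good reduction at every prime of $K$.

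Next I would translate ``good reduction of $E_0^{(d)}$'' into local conditions on $v_\frakp(d)$, prime by prime. Because $37 \mid m$, the prime $37$ ramifies in $K$; at the prime $\frakp_{37}$ above it I want the twist to be \emph{ramified}, i.e.\ $v_{\frakp_{37}}(d)$ odd, since a ramified quadratic twist is exactly what converts the type $I_0^*$ reduction of $E_0$ into good reduction. At each $\frakp_{q_j}$ (also ramified, as $q_j \mid m$) and at every other finite prime, where $E_0$ already has good reduction, I want the twist to be \emph{unramified}, i.e.\ $v_\frakp(d)$ even; for odd residue characteristic an even valuation makes $K_\frakp(\sqrt d)/K_\frakp$ unramified, and an unramified quadratic twist preserves good reduction. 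Finally, the hypothesis $q \equiv 1 \pmod 8$ forces $m = 37q \equiv 5 \pmod 8$, so $2$ is inert (unramified) in $K$; taking $d$ to be a unit at the prime $\frakp_2$ above $2$ then keeps the good reduction of $E_0$ at $2$, again because an unramified twist does not disturb good reduction.

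Altogether these constraints say that the ideal $(d)$ must equal $\frakp_{37}$ times the square of an ideal coprime to $2$, which is possible precisely when $[\frakp_{37}]$ is a square in the class group $\Cl(K)$. This is where the symbols $\leg{q_j}{37}$ enter, via genus theory. Since $\frakp_{37}^2 = (37)$, the class $[\frakp_{37}]$ is $2$-torsion, and it lies in the principal genus $\Cl(K)^2$ if and only if every genus character of $K$ is trivial on it. The genus characters are indexed by factorizations of the discriminant $D = m = -37\,q_1 \cdots q_n$ into coprime prime discriminants; evaluating the generator attached to $q_j^{\ast} = (-1)^{(q_j-1)/2} q_j$ at $\frakp_{37}$ gives $\leg{q_j^{\ast}}{37} = \leg{q_j}{37}$, using $37 \equiv 1 \pmod 4$ so that $\leg{-1}{37}=1$. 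The hypothesis $\leg{q_j}{37} = 1$ for all $j$ thus makes every genus character vanish on $\frakp_{37}$, so $[\frakp_{37}] \in \Cl(K)^2$ and a suitable $d$ exists (choosing the auxiliary ideal coprime to $2$ to arrange $d$ a unit at $\frakp_2$). The twist $E = E_0^{(d)}$ then has good reduction everywhere, giving $\EGR(K)$.

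I expect the main obstacle to be the local analysis at the two ``special'' primes rather than the bookkeeping at the $q_j$. Concretely, the delicate points are: (i) exhibiting the fixed curve $E_0/\Q$ with exactly the prescribed reduction behavior (integral rational $j_0 \neq 0,1728$, good outside $37$, good at $2$, and type $I_0^*$ with quadratic inertial type at $37$), and verifying that a ramified quadratic twist really does produce good reduction there; and (ii) the reduction at the inert prime $\frakp_2$, where residue characteristic $2$ makes the twisting subtle and where the congruence $q \equiv 1 \pmod 8$ must be used to guarantee that $E_0^{(d)}$ is genuinely good (not merely potentially good) at $2$. The genus-theoretic existence of $d$ is then the clean, essentially formal heart of the argument.
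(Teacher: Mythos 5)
Your reduction collapses at step (i): the auxiliary curve $E_0/\Q$ you posit cannot exist. Suppose $E_0$ had good reduction at every prime other than $37$ (including $2$), Kodaira type $I_0^*$ at $37$ with \emph{quadratic} inertial type, and $j_0 \neq 0, 1728$. Then the quadratic twist $E_0^{(37)}$ would have everywhere good reduction over $\Q$: at $37$, any two ramified quadratic characters of $\Q_{37}$ agree on inertia, so twisting by the character of $\Q_{37}(\sqrt{37})$ differs from undoing the inertial character only by an unramified twist, which preserves good reduction; and since $\Q(\sqrt{37})$ has discriminant $37$, the twist is unramified at every other prime, including $2$, so good reduction is preserved there as well. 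This contradicts the nonexistence of elliptic curves over $\Q$ with everywhere good reduction, quoted in the first line of the paper. The point is visible concretely for the invariant actually in play: with $j = 16^3$ one has $v_{37}(j - 1728) = v_{37}(2^6 \cdot 37) = 1$ odd, which forces $v_{37}(\Delta_{\min}) \in \{3, 9\}$, i.e.\ type $\mathrm{III}$ or $\mathrm{III}^*$ with inertial type of order $4$, not $2$; the type becomes $I_0^*$ with quadratic inertial character only after base change to the ramified completion $K_{\frakp_{37}}$. So the twisting must be performed on a curve over $K$ (which is in effect what Setzer's curves $E_{u,A}$ with $u \in K^{\times}$ accomplish), and your argument fails at its first move.

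The $2$-adic step is also genuinely gapped, not merely ``delicate'': your stated sufficient condition is false, since taking $d$ to be a unit at $\frakp_2$ does not make the twist unramified in residue characteristic $2$ (twisting by $-1$ is a unit twist and is ramified at $2$); what is needed is that $d$ be congruent to a square modulo $4\calO_K$, which is exactly the point at which Setzer's original paper erred and which the authors correct in their Remark on condition (2). Moreover the genus-theoretic ``formal heart'' cannot simply be supplemented by a free choice at $\frakp_2$: since $\calO_K^{\times} = \{\pm 1\}$ here, a generator $d$ of $\frakp_{37}\fraka^2$ is determined up to sign, squares, and generators coming from the ramified $2$-torsion classes (divisors of $m$), so its square class at $\frakp_2$ ranges over a fixed finite set, and showing that one member satisfies the mod-$4\calO_K$ condition is precisely the hypothesis-driven computation (using $q \equiv 1 \pmod{8}$) that you defer. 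The paper's proof avoids both problems by quoting Setzer's classification: Theorem~\ref{congrucond} repackages Theorem~\ref{Setzer}, with the norm condition handled by Legendre's theorem (the counterpart of your genus-character computation, and the one part of your plan that is essentially sound), and the $2$-adic subtleties absorbed into conditions (d)--(e); Theorem~\ref{thm238} then follows by checking that $A = 16 \in \mathcal{R}$ and $16^3 - 1728 = 37 \cdot 8^2$ make $D = 37$ good, and that the hypotheses on $q$ verify conditions (a)--(e).
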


\begin{rem}
If $m$ is as described in Theorem~\ref{thm238}, there exists $E/K$ with $\EGR$ and $j(E) = 16^3$.
\end{rem}

We can achieve results like Theorem~\ref{thm237} and \ref{thm238} for integers other than 2 and 37; these two cases are all is required to prove Theorem~\ref{rsqrtbound} and \ref{isqrtbound}.  

To obtain a density result for $m = qD$, where $D$ is fixed and $q$ varies, we define certain `good' $D$. We say $D$ is good if it is the square free part of $A^3 - 1728$, where $A$ satisfies certain congruence conditions modulo powers of 2 and 3. These congruence conditions will be described explicitly in Section 2. If $D$ is good, then $K = \mathbb{Q}\sqrt{Dq}$ has $\EGR$ whenever $D$ and $q$ satisfy certain explicit conditions, see Section~\ref{Section2}.  Define 
 \begin{displaymath}
\epsilon_D = \left\{
\begin{array}{l l}
1 & D \equiv 1 \pmod{4} \\
-1 & \text{otherwise} \\
\end{array}
\right.
\end{displaymath}
When $\text{sign}(D) = -\epsilon_D$, we get real quadratic fields $\Q(\sqrt{qD})$, and when $\text{sign}(D) = \epsilon_D$, we get imaginary quadratic fields.  

Using this, we show that $R_D(X)$, the number of $q \leq X$ such that $Q(\sqrt{Dq})$ is a real $\EGR$ quadratic number field, satisfies the following lower bound: 

\begin{thm}
\label{genboundr}
Let $D$ be good with $r$ distinct prime factors and $R_D(X)$, the number of $\EGR$ real quadratic number fields $Q(\sqrt{Dq})$ with $q \leq X$. 
Assume that $\text{sign}(D) = -\epsilon_D$.  Then $$R_D(X) \gg \frac{X}{\log^{1 - 1/2^r} X}.$$
\end{thm}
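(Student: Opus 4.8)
The plan is to convert the arithmetic-geometric count $R_D(X)$ into a purely multiplicative counting problem and then apply standard results on integers with restricted prime factors. By the description of good $D$ in Section~\ref{Section2} (and as illustrated by Theorems~\ref{thm237} and~\ref{thm238}), the field $\Q(\sqrt{Dq})$ has $\EGR$ whenever $q = q_1 \cdots q_n$ is a product of distinct primes $q_j$, each lying in a fixed set $S$ of admissible primes cut out by quadratic (Legendre symbol) and mod-$2$-power conditions, and $q$ additionally satisfies one global congruence modulo $8$; the hypothesis $\text{sign}(D) = -\epsilon_D$ is precisely what places these fields on the real side. Consequently $R_D(X)$ is at least the number $N(X)$ of such squarefree $q \le X$, and it suffices to show $N(X) \gg X/\log^{1-1/2^r} X$.

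First I would pin down the Dirichlet density of $S$. The admissibility conditions on a single prime $q_j$ reduce, via quadratic reciprocity, to $r$ independent quadratic splitting conditions --- one attached to each prime factor of $D$ --- or equivalently to a prescribed union of residue classes modulo $8D$. Each such condition has density $1/2$, and independence (verified by Chebotarev in the relevant multiquadratic field, or directly by Dirichlet's theorem on classes mod $8D$) gives $S$ density exactly $1/2^r$; in particular $\sum_{p \in S,\, p \le t} 1/p = 2^{-r}\log\log t + O(1)$.

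Next I would count the squarefree products $q$ of primes in $S$ with no congruence restriction. The density estimate for $S$ gives the Mertens-type product $\prod_{p \in S,\, p \le X}(1 + 1/p) \asymp \log^{1/2^r} X$; feeding the associated Dirichlet series $F(s) = \prod_{p \in S}(1 + p^{-s})$, which behaves like $\zeta(s)^{1/2^r}$ times a nonvanishing holomorphic factor near $s=1$, into the Landau--Selberg--Delange method (or an elementary Tauberian/partial-summation argument, since only a lower bound is needed) yields $\#\{q \le X : q \text{ a squarefree product of primes in } S\} \asymp X/\log^{1-1/2^r} X$. The positivity built into these Tauberian statements keeps the lower-bound direction clean.

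Finally I would reinstate the global congruence $q \equiv a \pmod 8$ using orthogonality of Dirichlet characters modulo $8$. The subtlety, which is the main obstacle, is that some characters are \emph{constant} on $S$ (those detecting conditions automatically forced on any product of admissible primes): these contribute at the same order as the principal character and must be checked to be consistent with the target class $a$, which is exactly the assertion that the global condition is attainable. The remaining characters are equidistributed over $S$, so $\sum_{p \in S}\chi(p)/p = O(1)$ and their twisted sums are of strictly lower order. Hence the class $q \equiv a \pmod 8$ captures a positive proportion of the $q$ counted above, giving $N(X) \gg X/\log^{1-1/2^r} X$ and thus the theorem. The crux is therefore twofold: the Tauberian passage from $\sum 1/q$ to $\sum 1$, and the verification that the defining conditions on $S$ equidistribute the admissible primes enough that the global congruence is neither forbidden nor able to annihilate the main term.
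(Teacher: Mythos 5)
Your proposal follows essentially the same route as the paper: you reduce $R_D(X)$ to counting squarefree products $q \le X$ of primes in a set of density $1/2^r$ cut out by Legendre-symbol conditions (the quadratic-reciprocity translation of conditions (a)--(b) of Theorem~\ref{congrucond}, which is exactly the paper's case analysis on $D \bmod 8$), then apply a Tauberian count of order $X/\log^{1-1/2^r}X$ --- where the paper simply cites Serre's Theorem~\ref{Serre}, you invoke the Selberg--Delange method, which is the same underlying machinery --- and finally argue that a positive proportion of these $q$ land in the required residue class modulo $8$. If anything, your character-orthogonality treatment of the mod-$8$ restriction (isolating the characters constant on the admissible prime set and checking attainability of the target class) is more explicit than the paper's brief ``a positive proportion of elements of $\bar{E}(X)$ are also in $E'(X)$'' assertion, so the two arguments agree in substance.
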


We obtain a similar result to show that $I_D(X)$, the number of $\EGR$ imaginary quadratic number fields $Q(\sqrt{Dq})$ satisfies the following lower bound. 

\begin{thm}
\label{genboundim}
Let $D$ be good with $r$ distinct prime factors and $I_D(X)$, the number of $\EGR$ imaginary quadratic number fields $Q(\sqrt{Dq})$ with $q \leq X$. Assume that $\text{sign}(D) = \epsilon_D$.  Then $$I_D(X) \gg \frac{X}{\log^{1 - 1/2^r} X}.$$
\end{thm}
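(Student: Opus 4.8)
The plan is to mirror the proof of Theorem~\ref{genboundr}, reversing only the global sign so that the resulting fields are imaginary. First I would recall the explicit conditions of Section~\ref{Section2} (the general form of Theorem~\ref{thm238}): when $D$ is good with prime factors $p_1,\dots,p_r$, the field $\Q(\sqrt{Dq})$ has $\EGR$ whenever $q=\varepsilon\,q_1\cdots q_n$ is a squarefree product of distinct primes $q_j$, each satisfying a prescribed system of splitting conditions $\leg{q_j}{p_i}=\pm 1$ (one for each $p_i\mid D$) together with a congruence condition on the product $q_1\cdots q_n$ modulo a fixed power of $2$, and the sign $\varepsilon$ is chosen so that $Dq<0$. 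The hypothesis $\text{sign}(D)=\epsilon_D$ is exactly what makes this sign choice consistent with the congruence requirement, so that $\Q(\sqrt{Dq})$ is imaginary. It therefore suffices to bound below the number of squarefree $N=q_1\cdots q_n\le X$ all of whose prime factors lie in a fixed set $S$ of primes and which fall in a prescribed residue class modulo $2^k$; each such $N$ yields an admissible $q=\pm N$, so $I_D(X)$ is at least this count.

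The next step is to compute the density of $S$. Each condition $\leg{q_j}{p_i}=\pm1$ is, by quadratic reciprocity, a condition on $q_j$ modulo a fixed modulus $M$, and the $r$ such conditions are independent; hence $S$ is a union of arithmetic progressions modulo $M$ whose natural density is $\delta=1/2^r$. By Dirichlet's theorem and partial summation this gives $\sum_{p\le t,\,p\in S}\frac1p=\delta\log\log t+O(1)$. I would then note that the extra congruence on the product modulo $2^k$ does not lower the per-prime density: summing against the characters modulo $2^k$ by orthogonality, the principal character supplies the main term while the non-principal characters contribute lower-order terms, so the congruence costs only a constant factor.

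With the density in hand, the count is a standard estimate for integers built from a set of primes of density $\delta$. The Dirichlet series $\sum a(N)N^{-s}=\prod_{p\in S}\pa{1+p^{-s}}$, where $a$ is the indicator of squarefree integers with all prime factors in $S$, behaves like $\zeta(s)^{\delta}$ up to a factor holomorphic and non-vanishing at $s=1$, so by a Landau--Selberg--Delange Tauberian argument the number of such $N\le X$ is $\asymp X(\log X)^{\delta-1}=X/\log^{1-1/2^r}X$. Only the lower bound is needed, and this can be obtained by elementary multiplicative number theory (a Rankin-type partial-summation argument using the Mertens-type estimate above), avoiding the full asymptotic. Combining with the previous paragraph yields $I_D(X)\gg X/\log^{1-1/2^r}X$.

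The main obstacle is bookkeeping rather than analysis: one must verify that the $r$ splitting conditions are genuinely independent (so that the density is exactly $1/2^r$ and not smaller) and that the residue condition on the product modulo $2^k$, which couples the primes $q_j$ together, can be imposed simultaneously with the ``prime factors in $S$'' condition without eroding the exponent. Since Theorem~\ref{genboundr} establishes precisely this counting bound under the mirror hypothesis $\text{sign}(D)=-\epsilon_D$, and the only structural difference here is the sign making $Dq$ negative, the cleanest presentation is to carry out the argument once and observe that the imaginary case follows verbatim with $\epsilon_D$ in place of $-\epsilon_D$.
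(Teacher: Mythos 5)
Your proposal is correct and follows essentially the same route as the paper: both reduce to the conditions of Theorem~\ref{congrucond} and then count squarefree products of primes from a set of density $1/2^r$ subject to a residue condition modulo a power of $2$, yielding $I_D(X) \gg X/\log^{1-1/2^r}X$ (the paper in fact handles the imaginary case inside the proof of Theorem~\ref{genboundr}, noting the calculations carry over with the sign reversed, exactly as you observe). The only difference is presentational: where the paper invokes Serre's theorem (Theorem~\ref{Serre}) as a black box and asserts that a positive proportion of $\bar{E}(X)$ lies in the required residue class modulo $8$, you re-derive the count via Selberg--Delange (or a Rankin-type lower bound) and justify the residue condition on the product by character orthogonality --- a more self-contained but substantively identical argument.
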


\begin{rem}
While we have only looked at curves with rational $j$-invariant, Noam Elkies' computations \cite{Elkies:2014:Online} suggest that very few $E/K$ with $\EGR$ have $j(E) \not \in \Q$ and unit discriminant. Therefore, the theorem below, which to the best of our knowledge has not previously appeared in the literature, suggests that most fields of the form $K = \Q(\sqrt{\pm p})$ for primes $p \equiv 3 \pmod{8}$ are not $\EGR$. This is consistent with Elkies' data. 
\end{rem} 

Using this approach we were also able to determine nonexistence of $\EGR$ quadratic fields. 

\begin{thm}
\label{nonexistence}
Let $p \equiv 3 \pmod{8}$ be prime. 
\begin{enumerate}
\item Let $K = \Q(\sqrt{p})$.  Then there are no $E/K$ with $\EGR$ and $j(E) \in \Q$.  

\item let $K = \Q(\sqrt{-p})$.  Then there are no $E/K$ with $\EGR$ and $j(E) \in \Q$.
\end{enumerate}
\end{thm}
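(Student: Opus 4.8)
The plan is to show that the arithmetic data forced by an $\EGR_{\Q}(K)$ curve over $K=\Q(\sqrt{\pm p})$ cannot be realized when $p\equiv 3\pmod 8$, with the obstruction concentrated at the prime $2$. Since $E$ has good reduction at every finite place, $j=j(E)$ is integral at each $\frakp$, and being rational it lies in $\Z$. The complex-multiplication values $j\in\{0,1728\}$ will be disposed of by the same local method used below (for instance $j=0$ already dies from the $3$-adic content of the discriminant when $p\neq 3$). For $j\neq 0,1728$, fix a Weierstrass model over $\calO_K$ with invariants $c_4,c_6,\Delta$. Everywhere good reduction says the minimal discriminant ideal is trivial, so $(\Delta)=\mathfrak f^{12}$ for some ideal $\mathfrak f$. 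From $c_4^3=j\Delta$ and $c_6^2=(j-1728)\Delta$ one reads that $(j)$ is a cube ideal and $(j-1728)$ is a square ideal; as $j\in\Z$ this forces $j=A^3$ for some $A\in\Z$ and forces the squarefree part $D$ of $A^3-1728$ to be divisible only by the rational primes ramifying in $K$. Thus $D\in\{\pm 1,\pm p\}$ when $K=\Q(\sqrt{-p})$ (only $p$ ramifies), and $D\in\{\pm 1,\pm 2,\pm p,\pm 2p\}$ when $K=\Q(\sqrt{p})$ (both $2$ and $p$ ramify).

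Next I pass to the prime $2$. Writing $c_6^2=(A^3-1728)\Delta=Dw^2\Delta$ shows $D\Delta\in K^{*2}$, so $\Delta$ and $D$ have the same square class; since $(\Delta)$ is a square ideal this is a unit square class, and in the imaginary case I use Stroeker's theorem \cite{Stroeker:1983to} (no $\EGR$ curve over an imaginary quadratic field admits a global minimal model) to pin down $\mathfrak f$, hence $\Delta$, modulo squares. The hypothesis $p\equiv 3\pmod 8$ now enters through the splitting of $2$: it is inert in $\Q(\sqrt{-p})$ (as $-p\equiv 5\pmod 8$) and ramified in $\Q(\sqrt p)$. Running Kraus's criterion over the completion $K_{\frakp}$ for $\frakp\mid 2$, good reduction forces congruences of the shape $c_6\equiv -1$ modulo a small power of $\frakp$ (ordinary reduction) or $v_{\frakp}(c_6)=3$ together with a unit condition (supersingular reduction). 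Substituting $c_4^3=A^3\Delta$ and $c_6^2=Dw^2\Delta$ and running through the admissible values of $D$, each branch reduces to a congruence on $A$ modulo a power of $2$ that becomes unsolvable once $p\equiv 3\pmod 8$; this is precisely the failure of the `good $D$' congruences of Section~\ref{Section2} for $D=\pm p$. Hence no model with good reduction at $2$ exists, and the curve cannot exist.

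I expect this last, $2$-adic step to be the main obstacle, for two reasons. First, at residue characteristic $2$ quadratic twisting is wild, so good reduction at $2$ is not detected by the square class of $D$ alone: one must genuinely run Kraus's conditions (equivalently Tate's algorithm) over $K_{\frakp}$, separating ordinary from supersingular reduction and tracking $c_4,c_6$ modulo small powers of $\frakp$, with the bookkeeping differing between the inert case $\Q(\sqrt{-p})$ and the ramified case $\Q(\sqrt p)$. Second, the absence of a global minimal model in the imaginary case means the computation must be carried out with the ideal $\mathfrak f^{12}$ rather than a single unit discriminant, so one cannot simply normalize $\Delta=\pm 1$. By contrast, the reductions to $j=A^3$ and to $D$ supported on ramified primes are short ideal-theoretic arguments, and the final incompatibility is an elementary congruence check once the correct $2$-adic congruence has been isolated.
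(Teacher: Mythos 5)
Your plan has a genuine error at its central claim: the obstruction is \emph{not} concentrated at the prime $2$ in the imaginary case, so your Kraus-at-$2$ computation would terminate without producing a contradiction there. For $K = \Q(\sqrt{-p})$ with $p \equiv 3 \pmod 8$ we have $m = -p \equiv 5 \pmod 8$, hence $m \equiv 1 \pmod 4$, and the $2$-adic condition in Setzer's classification (condition (2) of Theorem~\ref{Setzer}, i.e.\ condition (d) of Theorem~\ref{congrucond}) is \emph{satisfied} for both candidates $D = \pm p$: locally at the inert prime above $2$ there are admissible triples $(c_4, c_6, \Delta)$, and no amount of tracking reduction types at $\frakp \mid 2$ will rule them out. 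What actually fails --- and this is how the paper argues --- is the norm condition: $\epsilon_D D$ must be a rational norm from $K$, which by Legendre's theorem applied to $\epsilon_D = D(a')^2 - b^2 q$ amounts to $\pf{-\epsilon_D q}{p} = 1$; for either pair $(D,q) = (\pm p, \mp 1)$ this becomes $\pf{-1}{p} = 1$, false since $p \equiv 3 \pmod 4$. That obstruction lives at the odd ramified prime $p$ and the archimedean place ($-p < 0$ cannot be a norm from an imaginary quadratic field), not at $2$. Relatedly, your closing identification of the contradiction with ``the failure of the good-$D$ congruences'' is off: goodness of $D$ is a condition on $A$ alone and does not involve $p$; $D = \pm p$ can perfectly well be good (e.g.\ $D = 37$ is), and what fails are the compatibility conditions between $D$, $q$, and $m$ in Theorem~\ref{congrucond}.

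There is a secondary gap in the real case. You correctly list $D = \pm 1$ among the candidates for $\Q(\sqrt p)$, but these branches are not eliminated by any $2$-adic congruence: ruling them out requires knowing that $\pm Y^2 = X^3 - 1728$ has no integral points with $Y \neq 0$ (two rank-zero Mordell curves), which is exactly why the paper's remark in Section~\ref{Section2} excludes $\pm 1$ from the good $D$'s; your CM discussion covers only $t = 0$, i.e.\ $j = 0, 1728$, whereas $D = \pm 1$ with $t \neq 0$ would be a non-CM branch. The remaining real-case branches do align with the paper: $D = \pm 2, \pm 2p$ die because $m = p$ is odd while condition (3)/(e) forces $m \equiv 4 + D \pmod{16}$, and $D = \pm p \equiv \pm 3 \pmod 8$ dies because $m \equiv 3 \pmod 4$ violates condition (2)/(d). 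Note finally that the paper never re-runs Tate or Kraus: it quotes Setzer's classification, recast as Theorem~\ref{congrucond}, and the whole proof is a finite congruence check on the pairs $(D, q) = (\pm p, \pm 1)$. Your proposal, even where it is aimed correctly, amounts to re-proving Setzer's theorem at the prime $2$, and as written it cannot close case (2) at all.
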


\begin{rem}
In \cite{kagawa2000nonexistence}, Kagawa showed that if $p$ is a prime number such that $p \equiv 3 (4)$ and $p \neq 3, 11$, then there are no elliptic curves with $\EGR$ over $K=\mathbb{Q}(\sqrt{3p})$ whose discriminant is a cube in $K$. Since  all $\EGR_{\mathbb{Q}}(K)$ curves have cubic discriminant as shown in Setzer \cite{Setzer:1981vg}, this gives a result similar to Theorem 1.7.\end{rem}
  

In Section~\ref{Section2}, we describe conditions arising from Setzer to define when we have $\EGR$ quadratic fields. In Section~\ref{Section3}, we use these conditions to find a lower bound based on an example of Serre. In Section~\ref{Section4}, we will give examples of $\EGR$ real quadratic fields and $\EGR$ imaginary quadratic fields. 

\section{Constructing $\EGR$ Quadratic Fields}
\label{Section2}
In \cite{Setzer:1981vg}, given a rational $j$-invariant, Setzer determines whether there exists an elliptic curve and number field over which this curve has everywhere good reduction. Following his notation, we make the following definitions. Let $\mathcal{R}$ be the following set:
\[
\mathcal{R}= \{A \in \mathbb{Z} : 2|A\Rightarrow 16|A \text{ or } 16|A-4,\text{ and } 3|A \Rightarrow 27|A-12\}.  
\]
Then $D$ is good if it is in the following set:
$$\{D: Dt^2 = A^3 - 1728, D \text{ square-free}, A \in \mathcal{R}, t \in \Z\}.$$
Given $D$ good, we define $\epsilon_D$ as follows: 

 \begin{displaymath}
\epsilon_D = \left\{
\begin{array}{l l}
1 & D \equiv 1 \pmod{4} \\
-1 & \text{otherwise} \\
\end{array}
\right.
\end{displaymath}

\begin{rem}We note that $\pm 1$ are not good, as the elliptic curves $Y^2 = X^3 - 1728, -Y^2 = X^3 - 1728$ have no integral points with $Y \neq 0$.  
\end{rem}

By Setzer \cite{Setzer:1981vg}, the only candidates for elliptic curves $E$ with $\EGR_{\mathbb{Q}}(K)$ over a quadratic field $K$ have $j(E)=A^3$ with $A \in \mathcal{R}$.  

\begin{thm}[\cite{Setzer:1981vg}]
\label{Setzer}
Let $K = \Q(\sqrt{m})$ be a quadratic field.  Then there exists an elliptic curve $E/K$ with $\EGR$ and rational $j$-invariant if and only if the following conditions are satisfied for some good $D \mid \Delta_K$.  
\begin{enumerate}
\item $\epsilon_D D$ is a rational norm from $K$. 
\item If $D \equiv \pm 3 \pmod{8}$, then $m \equiv 1 \pmod{4}$.
\item If $D$ is even then $m \equiv 4 + D \pmod{16}$.
\end{enumerate}
\end{thm}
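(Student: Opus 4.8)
The plan is to reconstruct Setzer's classification by fixing the rational $j$-invariant and reducing the everywhere-good-reduction (EGR) problem to one global norm obstruction together with a small number of local congruences at $2$ and $3$. By the discussion preceding the statement, any candidate curve has $j(E)=A^3$ with $A\in\mathcal R$, so I would fix one such $A$, set $D$ equal to the square-free part of $A^3-1728$ (so $A^3-1728=Dt^2$), and prove the equivalence for this single good $D$; the theorem's clause ``for some good $D\mid\Delta_K$'' is then the disjunction of these statements over all good $D$. Since $A\in\mathcal R$ forces $j=A^3\neq 0,1728$ and $D\neq\pm 1$ (by the preceding remark), every elliptic curve over $K$ with this $j$ is a quadratic twist $E_0^{(d)}$, $d\in K^\times/(K^\times)^2$, of one fixed model $E_0/\Q$ whose $c_4,c_6,\Delta$ are explicit polynomials in $A$ and $A^3-1728$; in particular the square-free part of $\Delta(E_0)$ is exactly $D$. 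Thus $K$ carries an $\EGR_\Q$ curve with this $j$ if and only if some twist $E_0^{(d)}$ has good reduction at every prime of $K$.

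Next I would impose good reduction prime by prime. Twisting by $d$ scales $(c_4,c_6,\Delta)$ by $(d^2,d^3,d^6)$, so at a prime $\frakp$ of $K$ the reduction type of $E_0^{(d)}$ depends only on the class of $d$ in $K_\frakp^\times/(K_\frakp^\times)^2$ and on the fixed local invariants of $E_0$. Because $j=A^3\in\Z$, $E_0$ has potentially good reduction everywhere, and at each bad prime $\ell\mid A^3-1728$ there is a distinguished square class $u_\ell\in\Q_\ell^\times/(\Q_\ell^\times)^2$ such that $E_0$ acquires good reduction precisely over $\Q_\ell(\sqrt{u_\ell})$; tracking the discriminant one finds that the product of these classes equals $\epsilon_D D$ modulo squares, with $\epsilon_D$ bookkeeping the archimedean and $2$-adic signs. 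Requiring $E_0^{(d)}$ to be good above each odd $\ell$ then amounts to a square-class condition on $d$; assembling these over all places, and using that $\epsilon_D D$ is a local norm from $K$ at $v$ exactly when the Hilbert symbol $(\epsilon_D D,m)_v=1$, the Hasse norm theorem converts the existence of a single global twisting element into condition (1). The same prime-by-prime bookkeeping also forces the bad primes (those dividing $D$) to ramify in $K$, which is what pins down the divisibility $D\mid\Delta_K$.

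The delicate part, and where I expect the real work to lie, is the analysis at $\ell=2$ (and to a lesser extent $\ell=3$), which is exactly what produces conditions (2) and (3). Here the naive ``becomes good over the right quadratic twist'' heuristic is not enough: wild ramification at $2$ means one must work with genuine minimal Weierstrass models, compare $\ord_\frakp(c_4),\ord_\frakp(c_6),\ord_\frakp(\Delta)$ against the admissibility conditions, and exploit the defining congruences of $\mathcal R$ (namely $16\mid A$ or $16\mid A-4$, and $27\mid A-12$) that were engineered precisely to control these $2$- and $3$-adic valuations. Carrying out this local computation at $2$ should show that good reduction above $2$ forces $m\equiv 1\pmod 4$ when $D\equiv\pm3\pmod 8$ (condition (2)) and $m\equiv 4+D\pmod{16}$ when $D$ is even (condition (3)), and that these congruences together with the norm condition (1) are also sufficient. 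I would then close the equivalence by proving necessity of (1)--(3) from the prime-by-prime analysis and, conversely, exhibiting under (1)--(3) an explicit $d\in K^\times$ for which $E_0^{(d)}$ is everywhere good.
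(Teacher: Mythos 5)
Your outline does not correspond to anything proved in this paper: Theorem~\ref{Setzer} is stated with attribution to Setzer \cite{Setzer:1981vg} and is not proved here --- the paper's only original contributions to it are the recorded twisting family $E_{u,A}$ and the Remark correcting condition (2). That said, your strategy --- fix $A \in \mathcal{R}$, observe $j = A^3 \neq 0, 1728$ so that every curve over $K$ with this $j$-invariant is a quadratic twist of one fixed rational model, impose good reduction place by place, and assemble the local square-class conditions into the norm condition (1) --- is a fair reconstruction of the strategy of Setzer's cited proof, and it is consistent with the shape of the construction the paper quotes. Note also that the paper's own Theorem~\ref{congrucond} performs the reverse translation of your last step, converting the norm condition back into local symbol conditions via Legendre's theorem.

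The genuine gap is that you defer the entire content of conditions (2) and (3) to a $2$-adic computation described only by what it ``should show.'' That step is not routine bookkeeping; it is precisely where the published proof went wrong, as the Remark following Theorem~\ref{Setzer} explains. Good reduction above $2$ forces the twisting element $u$ to be congruent to a square modulo $4\calO_K$, and when $D \equiv \pm 3 \pmod{8}$ the relevant $u$ has $N(u) \equiv 5 \pmod{8}$. If one reasons, as Setzer did, that congruence modulo $4\calO_K$ determines the norm modulo $8$, one concludes $m \equiv 5 \pmod{8}$ --- the erroneous condition in \cite{Setzer:1981vg}. The correct condition $m \equiv 1 \pmod{4}$ in the statement you are proving depends on noticing that for $m \equiv 1 \pmod{4}$ congruence modulo $4\calO_K$ only pins down the norm modulo $4$, and that one can choose $w \in 4\calO_K$ with $N(u+w) \equiv N(u) + 4 \pmod{8}$, so an element of norm $5 \bmod 8$ can nevertheless be a square modulo $4\calO_K$. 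Nothing in your sketch engages with this subtlety, and a naive execution of your plan at $\ell = 2$ would land on the wrong congruence, proving a false variant of (2). A complete write-up must make this wild $2$-adic case explicit (and likewise verify the $3$-adic role of $27 \mid A - 12$), and must also justify the asserted but unproved identity that the product of the local good-reduction square classes is $\epsilon_D D$ modulo squares.
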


To prove the theorem, Setzer shows that given a pair $(m, D)$ satisfying the conditions of the theorem, there exists $u \in K^\times$ such that $$E_{u, A}: y^2 = x^3 - 3A(A^3 - 1728)u^2 x - 2(A^3 - 1728)^2 u^3$$ has $j$-invariant $A^3$ and $\EGR$ over $K$. 

\begin{rem}
We correct a mistake in Condition (2) of this theorem as written in \cite{Setzer:1981vg}. 

We note that if $u \equiv v \pmod{4 \calO_K}$ and $m \equiv 2, 3, \pmod{4}$, then we must have that $N(u) \equiv N(v) \pmod{8}$.  However, if $m \equiv 1 \pmod{4}$, we only know that $N(u) \equiv N(v) \pmod{4}$.  Moreover, we can pick $w \in 4\calO_K$ such that $N(u + w) \equiv N(u) + 4 \pmod{8}$.  

Condition (2) as written in Setzer's paper states that if $D \equiv \pm 3 \pmod{8}$, then $m \equiv 5 \pmod{8}$.  $D \equiv \pm 3 \pmod{8}$ implies that a certain element $u \in \calO_K$ has $N(u) \equiv 5 \pmod{8}$.  But for the curve to have good reduction at primes dividing $2$, it is necessary that $u$ is congruent to a square modulo $4 \calO_K$.  For $m \equiv 2, 3 \pmod{4}$ this is not possible, as no squares can have norm equivalent to $5$ modulo $8$. However, if $m \equiv 1 \pmod{4}$, the condition that $N(u) \equiv 5 \pmod{8}$ is not an obstacle, as $u$ is congruent modulo $4 \calO_K$ to elements of norm $1$ modulo $8$.  Setzer mistakenly assumes that this can only happen when $m \equiv 5 \pmod{8}$.  
\end{rem}

In proving that fields do and do not have elliptic curves with $\EGR$ and rational $j$-invariant, the following equivalent version of Setzer's theorem will be useful.  

\begin{thm}\label{congrucond}
Fix $D$ good, and $m=qD$ square-free.   $K = \Q(\sqrt{m})$ has $\EGR$ if and only if the following conditions are satisfied:
\begin{enumerate}[(a)]
\item $(-\epsilon_D q / p_i)=1$ for all primes $p_i$ dividing $D$;
\item $(\epsilon_D D / q_j)=1$ for all primes $q_j$ dividing $q$; 
\item $m >0$ if $\epsilon_D D <0$;
\item If $D \equiv \pm 3\pmod 8$ then $q \equiv D \pmod 4$;
\item If $D$ is even then $q \equiv D+1 \pmod 8$.
\end{enumerate}
\end{thm}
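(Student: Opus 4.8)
The plan is to derive Theorem~\ref{congrucond} directly from Setzer's Theorem~\ref{Setzer} by translating each of its three conditions into the Legendre-symbol and congruence language above, using that $m = qD$ is squarefree so the primes dividing $D$ and the primes dividing $q$ are disjoint. First I would record the factorization of the discriminant: since $m=qD$ is squarefree, the set of primes dividing $\Delta_K$ splits into those dividing $D$, those dividing $q$, and possibly the prime $2$ (with the usual distinction between $m\equiv 1\pmod 4$ and $m\equiv 2,3\pmod 4$ governing whether $\disc(K)=m$ or $4m$). The key observation is that for a fixed good $D\mid\Delta_K$, the three abstract conditions of Theorem~\ref{Setzer} can be checked place by place, and conditions (b) and (c) of Theorem~\ref{Setzer} are already essentially in the desired form, so the real work is condition (1).

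The main step is to unwind condition (1) of Theorem~\ref{Setzer}, namely that $\epsilon_D D$ is a rational norm from $K=\Q(\sqrt{m})$. I would use the Hasse norm theorem (or equivalently the Hilbert symbol / local-global principle for norms from a quadratic extension): $\epsilon_D D$ is a global norm from $K$ if and only if it is a local norm at every place, and for a quadratic extension this amounts to a product formula of Hilbert symbols $\prod_v (\epsilon_D D, m)_v = 1$ together with the local conditions $(\epsilon_D D, m)_v = 1$ for each $v$. At a finite prime $p$ not dividing $2m$ the symbol is automatically trivial, so the conditions localize to the primes dividing $D$, the primes dividing $q$, the prime $2$, and the infinite place. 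At an odd prime $p_i\mid D$, writing $m=qD$ and using that $p_i\nmid q$, the local norm condition reduces to a Legendre symbol computation yielding precisely $(-\epsilon_D q/p_i)=1$, which is condition (a); symmetrically, at an odd prime $q_j\mid q$ one obtains $(\epsilon_D D/q_j)=1$, condition (b). The infinite place gives the sign condition (c): $\epsilon_D D$ must be positive whenever $m<0$, equivalently $m>0$ if $\epsilon_D D<0$.

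That leaves the dyadic place and the parity of $D$, which produce conditions (d) and (e). Here I would compute the Hilbert symbol $(\epsilon_D D, m)_2$ explicitly in terms of the residues of $q$, $D$, and $m$ modulo $8$ (and modulo $16$ when $D$ is even), matching against the corrected Condition~(2) and Condition~(3) of Theorem~\ref{Setzer} together with the definition of $\epsilon_D$. When $D\equiv\pm 3\pmod 8$ the $2$-adic analysis, combined with the requirement that the relevant $u\in\calO_K$ be congruent to a square modulo $4\calO_K$ (as emphasized in the remark correcting Setzer), forces $q\equiv D\pmod 4$, giving (d); when $D$ is even the finer condition $m\equiv 4+D\pmod{16}$ from Theorem~\ref{Setzer}(3), rewritten via $m=qD$ and the constraint on $D\bmod 16$ coming from $A\in\mathcal R$, collapses to $q\equiv D+1\pmod 8$, giving (e).

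\medskip

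The step I expect to be the main obstacle is the dyadic bookkeeping leading to (d) and (e): the prime $2$ ramifies or splits differently according to $m\bmod 4$, the Hilbert symbol $(\cdot,\cdot)_2$ depends on residues modulo $8$, and one must carefully track how the goodness of $D$ (the membership $A\in\mathcal R$, which constrains $A^3-1728=Dt^2$ and hence $D$ modulo powers of $2$) interacts with the condition that a norm-$5$-mod-$8$ element be a square mod $4\calO_K$. Getting the congruences to land exactly on $q\equiv D\pmod 4$ and $q\equiv D+1\pmod 8$, rather than on some coarser or finer condition, is where the correction to Setzer's original statement matters, and it is the only place where the argument is genuinely delicate rather than a routine symbol manipulation. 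By contrast, the odd-prime conditions (a), (b) and the archimedean condition (c) should fall out cleanly once the rational-norm condition is localized.
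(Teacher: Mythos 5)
Your localization of Setzer's norm condition is fine and, for conditions (a)--(c), essentially reproduces the paper's argument in different clothing: the paper observes that $D \mid a$ in $\epsilon_D D = a^2 - b^2 Dq$, reduces to the ternary equation $\epsilon_D = D(a')^2 - qb^2$, and invokes Legendre's theorem on ternary quadratic forms, whose solvability criterion is exactly your list of local conditions at the odd primes dividing $D$ and $q$ together with the sign condition (c). The genuine error is your claim that ``the dyadic place \dots produce[s] conditions (d) and (e).'' It cannot, for two reasons. First, the place $2$ imposes no condition on the norm criterion at all: once the symbols at every odd prime and at the infinite place are trivial (i.e., once (a)--(c) hold), the Hilbert product formula forces $(\epsilon_D D, m)_2 = 1$ automatically. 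This is precisely why Legendre's theorem, as used in the paper, carries no $2$-adic hypothesis, and why the ``dyadic bookkeeping'' you flag as the main obstacle would end with the discovery that there is nothing to compute: the symbol at $2$ carries no information beyond (a)--(c), so any attempt to extract (d) and (e) from it must fail. Second, (d) and (e) are not norm conditions at all: they are verbatim conditions (2) and (3) of Theorem~\ref{Setzer}, which encode good reduction at the primes above $2$ in Setzer's construction (the requirement that the relevant $u$ be a square modulo $4\calO_K$), and they are logically independent of the norm condition (1).

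Once this is sorted out, (d) and (e) follow by one-line congruence algebra rather than delicate $2$-adic analysis --- the paper dispatches them with the single sentence that they are ``directly equivalent to (2) and (3).'' Concretely: for $D \equiv \pm 3 \pmod 8$ (so $D$ odd and $D^2 \equiv 1 \pmod 8$), $m = qD \equiv 1 \pmod 4$ holds iff $q \equiv D \pmod 4$; for $D$ even and squarefree with $D' = D/2$ odd, $qD \equiv 4 + D \pmod{16}$ iff $D(q-1) \equiv 4 \pmod{16}$ iff $q \equiv 1 + 2D' = D + 1 \pmod 8$. The square-mod-$4\calO_K$ analysis you propose to redo is already packaged into the corrected statement of Theorem~\ref{Setzer}, so redoing it is redundant. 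Finally, note a step you gesture at but do not carry out, which the paper's proof does include: to pass between ``some good $D \mid \Delta_K$'' in Theorem~\ref{Setzer} and the fixed factorization $m = qD$, one must check that $D \mid \Delta_K$ forces $D \mid m$; the paper argues this via $\Delta_K = m$ when $m \equiv 1 \pmod 4$, via condition (3) ruling out even $D$ when $m \equiv 3 \pmod 4$, and via squarefreeness of $D$ when $m \equiv 2 \pmod 4$.
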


\begin{proof}[Proof of Theorem \ref{congrucond}]
We need to show that the conditions in Theorem~\ref{Setzer} are equivalent to those in Theorem~\ref{congrucond}.  

Assume that $K = \Q(\sqrt{m})$ where $m$ is square-free.  

Clearly if $m=qD$, $D$ divides $\Delta_K$. We need to show that if $D \mid \Delta_K$ then $D \mid m$.  This is trivial for $m \equiv 1 \pmod{4}$, as then $\Delta_K = m$. If $m \equiv 3 \pmod{4}$, then $D$ cannot be even because of (3), so $D \mid m$. If $m \equiv 2 \pmod{4}$, then $D$ must be square-free, so $D \mid m$.  

Now, $\epsilon_D D$ is a rational norm from $K$ if and only if  there exists a rational solution to $\epsilon_D D = a^2 - b^2 D q$.  Since $D \mid a$, the above is equivalent to the existence of a rational solution to $\epsilon_D = D (a')^2 - b^2 q$. Condition $(c)$ insures that $\epsilon_D$, $D$ and $-q$ are not all positive nor all negative. Then by Legendre's Theorem, \cite{ireland1982classical}, for $p_i$ a prime not dividing $m$,  there are always local solutions to $\epsilon_D D = a^2 - b^2 D q$ if and only if conditions (a) - (c) are satisfied. 

Conditions (d) and (e) are directly equivalent to (2) and (3).  
\end{proof}

To prove Theorem 1.1, the lower bound for $R_D(X)$ and Theorem 1.2, the lower bound for $I_D(X)$, we require Theorem 1.3 (which considers the case $D=2$) and Theorem 1.4 (which considers the case $D=37$). Below, we prove both those theorems using the result above.  

\begin{proof}[Proof of Theorem~\ref{thm237}]
Let $A = 20 \in \mathcal{R}$.  This shows that $D = 2$ is good.  For $m = 2 q$ with $q = q_1 \cdots q_n \equiv 3 \pmod{8}$ and $q_j \equiv 1, 3 \pmod{8}$ distinct primes, all of the conditions in Theorem~\ref{congrucond} are satisfied, and so $K = \Q(\sqrt{m})$ has $\EGR$.  \\
\end{proof}

\begin{proof}[Proof of Theorem~\ref{thm238}]
Let $A = 16 \in \mathcal{R}$.  This that shows that $D = 37$ is good.  For $m = 37q$ with $q = -q_1 \cdots q_n \equiv 1 \pmod{8}$ and $q_j$ distinct primes such that $\pf{q_j}{37} = 1$, all of the conditions in Theorem~\ref{congrucond} are satisfied, and so $K = \mathbb{Q}(\sqrt{m})$ has $\EGR$.  
\end{proof}

We also can use Theorem~\ref{congrucond} to prove nonexistence results about $\EGR$ quadratic fields. 

\begin{proof}[Proof of Theorem~\ref{nonexistence}]
Let $p \equiv 3 \pmod{8}$ be prime.  

To show that there are no $E/\mathbb{Q}(\sqrt{p})$ with $\EGR$ and rational $j$-invariant, we must show that neither of the pairs $(D, q) =  (\pm p, \pm 1)$ satisfy the conditions of Theorem~\ref{congrucond}.   We note that since $p=D \equiv \pm 3 \pmod{8}$, condition (d) implies that $q \equiv 5D \equiv \pm 1 \pmod{8}$, which is a contradiction. 

Similarly, to show that there are no $\EGR_{\mathbb{Q}}(\mathbb{Q}(\sqrt{-p})$, we have to show that neither of the pairs $(D, q) = (\pm p, \pm 1)$ satisfy the conditions of the theorem.  We note that in both cases, condition (a) implies that $\pf{-1}{p} = 1$, which is a contradiction. 
\end{proof}

\section{Finding Lower Bounds}
\label{Section3}
To prove the lower bounds, we use an example of Serre \cite{Serre72} as a reference.  

We define a set $E \subset \N_{>0}$ to be multiplicative if for all pairs $n_1, n_2$ relatively prime, we have that $n_1 n_2 \in E$ if and only if  $n_1 \in E$ or $n_2 \in E$.  Given a set $E$, let $P(E)$ be the set of primes $p$ in $E$.  Let $\bar{E} := \N_{>0} - E$, and $\bar{E}(X) := \{m \in \bar{E}, m \leq X\}$.  

\begin{thm}[\cite{Serre72}]
\label{Serre}
Suppose that $E$ is multiplicative and $P(E)$ has Chebotarev set $0 < \alpha < 1$.  Then $$\bar{E}(X) \sim cX/\log^\alpha X$$ for some $c > 0$.  
\end{thm}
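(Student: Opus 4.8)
The plan is to run the Selberg--Delange method: encode $\bar{E}$ in a Dirichlet series, pin down its singularity at $s=1$ using the Chebotarev density of $P(E)$, and then feed the resulting factorization into a Tauberian theorem. The first observation is that the multiplicativity hypothesis says exactly that the indicator $\mathbf{1}_{\bar{E}}$ is a multiplicative arithmetic function: for coprime $n_1,n_2$ the condition $n_1 n_2 \in E \iff n_1 \in E \text{ or } n_2 \in E$ is equivalent to $n_1 n_2 \in \bar{E} \iff n_1 \in \bar{E} \text{ and } n_2 \in \bar{E}$ (taking the natural convention $1 \in \bar{E}$). Consequently the Dirichlet series $F(s) = \sum_{n \in \bar{E}} n^{-s}$ has an Euler product $F(s) = \prod_p f_p(s)$, with local factor $f_p(s) = \sum_{a \ge 0,\; p^a \in \bar{E}} p^{-as}$, absolutely convergent for $\mathrm{Re}(s) > 1$.

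Next I would take logarithms and isolate the singular part. Writing $\log F(s) = \sum_p \log f_p(s)$ and splitting the primes according to membership in $P(E)$, one has $f_p(s) = 1 + p^{-s} + O(p^{-2s})$ when $p \notin P(E)$ (so $p \in \bar{E}$), while the $p^{-s}$ term is absent and $f_p(s) = 1 + O(p^{-2s})$ when $p \in P(E)$. Hence $\log F(s) = \sum_{p \notin P(E)} p^{-s} + h_0(s)$, where $h_0$ collects the $O(p^{-2s})$ contributions and is holomorphic for $\mathrm{Re}(s) > \tfrac12$. The point of the Chebotarev hypothesis is that $P(E)$, being a Frobenian set of density $\alpha$, satisfies $\sum_{p \in P(E)} p^{-s} = \alpha \log \frac{1}{s-1} + (\text{holomorphic near } s=1)$; this follows from expressing the indicator of a Frobenius class through characters and using the analytic continuation and nonvanishing on $\mathrm{Re}(s) = 1$ of the relevant Dedekind zeta / Artin $L$-functions. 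Combined with $\sum_p p^{-s} = \log \frac{1}{s-1} + (\text{holo})$, coming from $\log\zeta$, this gives $\sum_{p \notin P(E)} p^{-s} = (1-\alpha)\log\frac{1}{s-1} + (\text{holo})$.

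It follows that $F(s) = (s-1)^{-(1-\alpha)} H(s)$, where $H = \exp(h_0 + (\text{holo}))$ is holomorphic and nonvanishing near $s=1$, in fact throughout a standard zero-free region of the shape $\mathrm{Re}(s) \ge 1 - c/\log(|t|+2)$. A Tauberian theorem for Dirichlet series with an algebraic singularity of this type (the Selberg--Delange theorem, which is the analytic engine behind Serre's statement) then converts the singularity $(s-1)^{-(1-\alpha)}$ into the asymptotic
\[
\bar{E}(X) \sim \frac{H(1)}{\Gamma(1-\alpha)}\, \frac{X}{(\log X)^{1-(1-\alpha)}} = c\,\frac{X}{(\log X)^{\alpha}}, \qquad c = \frac{H(1)}{\Gamma(1-\alpha)} > 0,
\]
which is the claimed equivalence (note that $0 < \alpha < 1$ guarantees $\Gamma(1-\alpha) > 0$ and $c > 0$).

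The combinatorial input from $E$ is entirely routine; the genuine difficulty is analytic. The main obstacle is establishing the meromorphic continuation of $F$ into a zero-free region and controlling $H(s)$ there, since that is precisely what licenses the Selberg--Delange contour argument with the non-integer exponent $1-\alpha$. I would expect the prime powers $p^a \in \bar{E}$ lying above primes $p \in P(E)$ to require a little care, but they enter only through the holomorphic factor $h_0$ and so affect only the constant $c$, not the exponent $\alpha$; thus the real work is the $L$-function nonvanishing that underlies the Chebotarev density statement together with the Tauberian estimate itself.
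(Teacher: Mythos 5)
The paper does not actually prove this statement---it is quoted verbatim from Serre \cite{Serre72} as an external input---so the only proof to compare against is Serre's original one, which your outline reproduces: the Selberg--Delange method applied to the Dirichlet series of the multiplicative indicator of $\bar{E}$, with the Chebotarev/Frobenian hypothesis on $P(E)$ yielding the factorization $F(s)=(s-1)^{-(1-\alpha)}H(s)$ near $s=1$ and a Tauberian theorem converting the exponent $1-\alpha$ into the asymptotic $cX/\log^{\alpha}X$. Your proposal is correct in outline and takes essentially this same standard route, with the remaining work being exactly the routine analytic details (nonvanishing of the relevant $L$-functions in a zero-free region, control of $H$, and the harmless prime-power contributions) that you already identify.
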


We will use the theorem above to prove Theorem~\ref{genboundr} and Theorem~\ref{genboundim}.  As shown in Section 2, the special cases with $D = 2, 37$ will then imply Theorem~\ref{rsqrtbound} and \ref{isqrtbound}.  

\begin{proof}[Proof of Theorem~\ref{genboundr}]
Let $D$ be good. 

If $D$ is odd, let $D = \pm 1p_1 \cdots p_r$ be its prime factorization. Then we define $$\bar{E} := \{q_1^{a_1} \cdots q_n^{a_n}: \text{$q_j$ is prime, $a_j \geq 0$, $\pf{q_j}{p_i} = 1$}\}.$$

If $D$ is even, let $D = \pm 2 p_1 \cdots p_{r - 1}$ be its prime factorization.  Define
\begin{displaymath}
\delta = \left\{
\begin{array}{l l}
1 & \text{if $D/2 \equiv 1 \pmod{4}$} \\
-1 & \text{if $D/2 \equiv -1 \pmod{4}$} \\
\end{array}
\right.
\end{displaymath}

Let $$\bar{E} := \{q_1^{a_1} \cdots q_n^{a_n}: \text{$q_j$ is prime, $a_j \geq 0$, $\pf{q_j}{p_i} = 1$, $\pf{-2\delta }{q_j} = 1$}\}.$$

In both cases, $E$, the set such that $\bar{E} := \N_{>0} - E$, is multiplicative, and $P(E)$ has Chebotarev set $\alpha = 1 - 1/2^r$.  Therefore, by Theorem~\ref{Serre}, we have  
$$\bar{E}(X) \sim c X/\log^\alpha X.$$ Now, we have to relate $\bar{E}(X)$ to $R_D(X)$ and $I_D(X)$.  We will do this for three cases, when $D \equiv 3, 1, 2 \pmod{8}$, as the others work similarly.  

\begin{enumerate}

\item Assume that $D \equiv 3 \pmod{8}$, $D > 0$. Consider the set 
$$E'(X) := \{Dq: q \in \bar{E}(X), q \text{ square-free, and $q \equiv 7 \pmod{8}$}\}.$$ 
We first show that $$E'\subset R_D(X).$$ To do this we must show $E'$ satisfies conditions $(a)-(e)$ in Theorem~\ref{congrucond}. Condition $(a)$ is satisfied as:
$$\pf{-\epsilon_D q}{p_i} = \pf{q}{p_i} = \prod_j\pf{q_j}{p_i} = 1.$$
Condition $(b)$ follows from:
$$\pf{\epsilon_D D}{q_j} = \pf{-D}{q_j} = \pf{q_j}{D} = \prod_i \pf{q_j}{p_i} = 1.$$    
Both condition $(c)$ and $(d)$ follow directly from the definition of $E'$.  
Since a positive proportion of elements of $\bar{E}(X)$ are also in $E'(X)$ and $E'\subset R_D(X)$, this tells us that a positive proportion of $q \in \bar{E}(X)$ are such that $Dq \in R_D(X)$. Therefore, $$R_D(X) \gg \frac{X}{\log^\alpha(X)}.$$

\item Assume that $D \equiv 1 \pmod{8}, D < 0$.  We have that $$E'(X) := \{Dq: -q \in \bar{E}(X), q \text{ square-free}\} \subset R_D(X),$$ as $$\pf{-\epsilon_D q}{p_i} = \pf{-q}{p_i} = \prod_j \pf{q_j}{p_i} = 1$$ and $$\pf{\epsilon_D D}{q_j} = \pf{D}{q_j} = \pf{q_j}{-D} = \prod_i \pf{q_j}{p_i} = 1.$$  As a positive proportion of elements of $\bar{E}(X)$ are also in $E'(X)$, this tells us that a positive proportion of $-q \in \bar{E}(X)$ are such that $Dq \in R_D(X)$. Therefore, $$R_D(X) \gg \frac{X}{\log^\alpha(X)}.$$

\item Assume that $D \equiv 2 \pmod{8}, D < 0$.  We have that $$E'(X) := \{Dq: q \in \bar{E}(X), q \text{ square-free, and } q \equiv 3 \pmod{8}\} \subset I_D(X),$$ as $$\pf{-\epsilon_D q}{p_i} = \pf{q}{p_i} = \prod_j \pf{q_j}{p_i} = 1$$ and $$\pf{\epsilon_D D}{q_j} = \pf{-2}{q_j}\pf{-(-D)}{q_j} = \pf{q_j}{-D} = \prod_i \pf{q_j}{p_i} = 1.$$  As a positive proportion of elements of $\bar{E}(X)$ are also in $E'(X)$ (note that $\pf{-2}{q_j} = 1$ implies that $q_j \equiv 1, 3 \pmod{8}$), this tells us that a positive proportion of $q \in \bar{E}(X)$ are such that $Dq \in I_D(X)$.   Therefore, $$R_D(X) \gg \frac{X}{\log^\alpha(X)}.$$

Similar calculations can be done for $m<0$, resulting in $$I_D(X) \gg \frac{X}{\log^\alpha(X)}.$$

\end{enumerate}

\end{proof}

\begin{proof}[Proof of Theorem \ref{rsqrtbound}]
The theorem follows immediately from Theorem~\ref{genboundr} and Theorem~\ref{thm237}. Theorem 1.3 shows $D=2$ is good with $r=1$ distinct factors and the real quadratic field $K=\mathbb{Q}(\sqrt{qD})$ has 
$\EGR$. If $R(X)$ is the number of these fields, Theorem 1.5 shows
$$R(X) \gg \frac{X}{\sqrt{\log(X)}}.$$
\end{proof}

\begin{proof}[Proof of Theorem \ref{rsqrtbound}]
The theorem follows immediately from Theorem~\ref{genboundim} and Theorem~\ref{thm238}. Theorem 1.4 shows $D=37$ is good with $r=1$ distinct factors and the imaginary quadratic field $K=\mathbb{Q}(\sqrt{qD})$ has 
$\EGR$. If $I(X)$ is the number of these fields, Theorem 1.6 shows
$$I(X) \gg \frac{X}{\sqrt{\log(X)}}.$$
\end{proof}

\section{Examples}
\label{Section4}
In this section, we explain how to find elliptic curves with $\EGR$ when the conditions of Theorem~\ref{congrucond} are satisfied, and give examples of elliptic curves with $\EGR$. The results in this section are based on Setzer's construction in \ref{Setzer}. 

We start with a quadratic field $K = \mathbb{Q}(\sqrt{m})$ and a factorization $m = Dq$ with $D$ good which satisfies the conditions of Theorem~\ref{congrucond}.   We want to find $u$ such that 
$$E_{u, A}: y^2 = x^3 - 3A(A^3 - 1728)u^2 x - 2(A^3 - 1728)^2 u^3$$ has $\EGR_{\mathbb{Q}}(K)$.  Let $\alpha \in K$ have norm $\epsilon_D D$, and pick $n$ odd such that $\beta := n \alpha = a + b \sqrt{m} \in \calO_K$.  Let $A \in \mathcal{R}$ be such that $D$ is the square-free part of $A^3 - 1728$.  Define $d_1, d_2$ such that $3^2(A^3 - 1728) = Dd_1^2d_2^4$ with $d_1$ square-free. If $m \equiv 1, 2 \pmod{4}$, then one of $u = \pm \beta d_1$ works. If $m \equiv 3 \pmod{4}$, then either $u = \pm \beta d_1$ both work or $u = \pm \beta d_1 \rho$ both work, where $\rho = \frac{1}{2}(m + 1) + \sqrt{m}$. 

The table below has some examples.

$$
\begin{array}{| c | c | c | c | c | c |}
\hline
A & D & d_1 & q & \alpha & u \\
\hline
20 & 2 & 42 & 3 & 2 + \sqrt{6} & -d_1 \alpha = -84 - 42 \sqrt{6}  \\
\hline
-15 & -7 & 1 & -11& 35 + 4 \sqrt{77}& -d_1 \alpha = -35 - 4 \sqrt{77}\\
\hline
-32 & -11 & 42 & -15 & 77 + 6 \sqrt{165} & d_1 \alpha = 3234 + 252 \sqrt{165}\\
\hline
-32 & -11 & 42 & -3 & 11 + 2 \sqrt{33} & -d_1 \alpha = -462 - 84 \sqrt{33} \\
\hline
39  & 79 &1 &5 &79 + 4 \sqrt{395} & \pm d_1 \alpha \rho = \pm(17222 + 871 \sqrt{395}) \\
\hline
16 & 37 & 6 & -7 & 37 + 6 \sqrt{-259} & \pm d_1\alpha = \pm(222 +36 \sqrt{-259}) \\
\hline

\end{array}
$$

\bibliographystyle{plain}
\bibliography{refs.bib}
\end{document}